\theoremstyle{cupthm}
\newtheorem{thm}{Theorem}[section]
\newtheorem{cor}[thm]{Corollary}
\newtheorem{lemma}[thm]{Lemma}
\theoremstyle{cupdefn}
\theoremstyle{cuprem}
\numberwithin{equation}{section}
\newtheorem{conj}[thm]{Conjecture}
\newtheorem{example}[thm]{Example}
\theoremstyle{definition}
\numberwithin{equation}{section}
\newcommand{\ff}{\mathbb F}
\newcommand{\pp}{\mathbb P}
\newcommand{\rf}[2]{{#1 \ref{#2}}}
\newcommand{\alpz}{\alpha_0}
\newcommand{\alpu}{\alpha_1}
\newcommand{\alpd}{\alpha_2}
\newcommand{\mor}{\operatorname{Mor}}
\newcommand{\rank}{\operatorname{rank}}
\newcommand{\Conceicao}{Concei{\c c}\~ao}
\begin{document}
\title[On integral points on isotrivial elliptic curves over function fields]{On integral points on isotrivial elliptic curves over function fields}
\author{Ricardo \Conceicao}
\address{Gettysburg College. 300 N Washington st. Gettysburg, PA 17325.}
\email{rconceic@gettysburg.edu}
%\subjclass[2000]{Primary 54C40, 14E20; Secondary 46E25, 20C20}
%\date{May 2009}

\begin{abstract}
 Let $k$ be a finite field and $L$ be the  function field of a curve $C/k$ of genus $g\geq 1$.

In the first part of this note, we show that the number of separable $S$-integral points on a constant elliptic  curve $E/L$ is bounded  solely in terms of $g$, the size of $S$ and the rank of the Mordell-Weil group $E(L)$.

In the second part, we assume that $L$ is the function field of a hyperelliptic curve $C_A:s^2=A(t)$, where $A(t)$ is a square-free $k$-polynomial of odd degree. If $\infty$ is the place of $L$ associated to the point at infinity of $C_A$, then we prove that the set of separable $\{\infty\}$-points can be bounded solely in terms of $g$ and does not seem to depend on the  Mordell-Weil group $E(L)$. This is done by bounding  the number  of separable integral points over $k(t)$ on elliptic curves of the form $E_A:A(t)y^2=f(x)$, where $f(x)$ is a polynomial over $k$. Additionally, we show that, under an extra condition on $A(t)$, the existence of a separable integral point of ``small'' height on the elliptic curve $E_A/k(t)$ determines the isomorphism class of the elliptic curve $y^2=f(x)$.\end{abstract}

%% - subject classification and keywords
%% 2010 American Mathematical Society Subject Classification
%% Provide only ONE primary classification
%\classification{primary 	11G05; secondary 	11G20}
%%% Four or five keywords or phrases
%\keywords{elliptic curves, function fields, Lang's conjecture}

\maketitle
\section{Introduction}

Let $k$ be a finite field   and $L$ be the function field of a curve $C/k$. The goal of this note is to discuss arithmetical properties satisfied by integral points on isotrivial elliptic curves over $L$, i.e., when the $j$-invariant of the elliptic curve is an element of $k$. More specifically, we study integral points on constant elliptic curves and some of their quadratic twists. 

The first property is related to a longstanding conjecture of S. Lang that roughly says that the number of integral points is bounded independently of the model, for a certain class of models. To make this statement more precise, we let $L$ be a number field, $S$ be a finite set of places of $L$ containing the archimedian places, $R_S$ be the ring of $S$-integers of $L$, and let $E$ be an elliptic curve over $L$. 
\begin{conj}[Lang]
The number of $S$-integral points on a quasi-minimal model of an elliptic curve  $E/L$ is bounded solely in terms of the field $L$, the
set $S$, and the rank of the Mordell-Weil group $E(L)$. 
\end{conj}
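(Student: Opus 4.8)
The plan is to recast the conjecture as a lattice-counting problem and to follow the strategy of Silverman and of Hindry--Silverman. Fix a quasi-minimal Weierstrass equation for $E/L$ and write $R=\rank E(L)$. The canonical height $\hat h$ makes $E(L)\otimes\mathbb{R}\cong\mathbb{R}^{R}$ into a Euclidean space in which $E(L)/E(L)_{\mathrm{tors}}$ is a lattice $\Lambda$ with successive minima $\lambda_1\le\cdots\le\lambda_R$. Since the number of torsion points is bounded in terms of $L$ alone, it suffices to bound the number of vectors of $\Lambda$ represented by $S$-integral points. I would split these according to a threshold $T_0\asymp\log|\Delta_E|$, where $\Delta_E$ is the minimal discriminant: the ``large'' points with $\hat h\ge T_0$ and the ``small'' points with $\hat h<T_0$, and bound each set separately.

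For the large points I would combine the quasi-orthogonality of integral points with a strong form of Diophantine approximation. A large $S$-integral point is forced, via the reduction underlying Siegel's theorem, to be an exceptionally good approximation at some place of $S$ to one of a finite set of algebraic points; the quantitative form of Roth's theorem (Bombieri--van der Poorten) bounds the number of such approximations attached to a fixed place by an absolute constant, independently of the model. The quasi-parallelogram law for $\hat h$ then shows that two large integral points of comparable height cannot subtend a small angle in $\mathbb{R}^{R}$, so the large integral directions are separated on the unit sphere $S^{R-1}$ by a fixed angle; covering $S^{R-1}$ by spherical caps bounds the number of such directions by $c_0^{\,R}$ with $c_0$ absolute. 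Summing the per-place approximation bound over the places of $S$ then yields a bound of the shape $c_1^{\#S}\,c_0^{\,R}$ for the large points, crucially with no dependence on $\Delta_E$ and no logarithmic factor in the maximal height.

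For the small points I would count lattice vectors of norm at most $\sqrt{T_0}$ by the geometry of numbers: their number is at most $\prod_{i=1}^{R}\bigl(1+c\sqrt{T_0}/\lambda_i\bigr)\le\bigl(1+c\sqrt{T_0}/\lambda_1\bigr)^{R}$. The only unknown is the smallest minimum $\lambda_1^2=\min_{P\ne O}\hat h(P)$. The best unconditional lower bounds (of Masser and of Hindry--Silverman) control $\hat h$ from below in terms of the conductor $N_E$, giving $\lambda_1^2\gg_L\log N_E$, whereas the threshold scales with the discriminant, $T_0\asymp\log|\Delta_E|$. Hence $\sqrt{T_0}/\lambda_1\ll\sqrt{\log|\Delta_E|/\log N_E}=\sqrt{\sigma_E}$, where $\sigma_E$ is the Szpiro ratio, and the small points number at most $\bigl(1+c\sqrt{\sigma_E}\bigr)^{R}$. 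Adding the two contributions produces an unconditional bound of the form $c_1^{\#S}\,c_2^{(1+\sigma_E)R}$.

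The main obstacle is precisely the appearance of $\sigma_E$ in the exponent, which enters only through the small-points count and reflects the gap between the conductor (what one can bound $\hat h$ below by unconditionally) and the discriminant (the natural scale of $T_0$). Removing it requires the uniform Lang height lower bound $\hat h(P)\gg_L\log|\Delta_E|$ for nontorsion $P$, equivalently a uniform upper bound on $\sigma_E$; both are known to follow from the Szpiro conjecture, hence from $abc$. I therefore expect this route to prove Lang's conjecture unconditionally only for families of elliptic curves of bounded Szpiro ratio, and to reduce the general case to $abc$, which is where the genuine difficulty lies.
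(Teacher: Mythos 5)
The statement you were asked about is Lang's conjecture itself, and it remains an open problem over number fields: the paper states it only as motivation and offers no proof of it, so there is no ``paper's own proof'' to match. Your proposal, which is a faithful reconstruction of the Hindry--Silverman strategy from \cite{hindry_canonical_1988}, does not close this gap either, as you yourself diagnose in your final paragraph: the small-points count requires the uniform lower bound $\hat h(P)\gg_L \log|\Delta_E|$ for non-torsion $P$ (Lang's height conjecture), and without it the Szpiro ratio $\sigma_E$ survives in the exponent. Since $\sigma_E$ depends on the particular curve $E$ and is not bounded in terms of $L$, $S$, and $\rank E(L)$ alone, the bound $c_1^{\#S}c_2^{(1+\sigma_E)R}$ is \emph{not} an instance of the conjectured bound; it is exactly the conditional theorem of Hindry--Silverman (Lang's conjecture follows from Szpiro's conjecture), not a proof of the conjecture. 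So the honest verdict is: the proposal is a correct account of the best known attack, but it proves a strictly weaker, conditional statement, and the missing ingredient (the height lower bound, equivalently bounded Szpiro ratio) is precisely where the open difficulty lies. One smaller caveat: your large-points argument as sketched also leans on making the Roth/Bombieri--van der Poorten approximation counts uniform in the model, which in \cite{hindry_canonical_1988} is again routed through the same discriminant-scaled height machinery rather than obtained for free; the two halves of your split are not independent of the problematic normalization $T_0\asymp\log|\Delta_E|$.

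It is worth contrasting this with what the paper actually proves, namely Theorem \ref{thm:langconj}, a function-field analogue for \emph{constant} curves, where both of the hard inputs of your sketch become trivial. There the canonical height is the degree of a morphism $f:C\longrightarrow E$, so the analogue of Lang's height lower bound is simply $\deg f\geq 1$ for non-constant $f$ --- no Szpiro-type input needed --- and the analogue of your threshold $T_0$ is supplied unconditionally by Riemann--Hurwitz: any non-constant separable $S$-integral morphism satisfies $\deg f\leq |S|+4(g-1)$. The counting step is then a single elementary pigeonhole (Lemma \ref{latt}: points of $\Lambda$ of norm $\leq T$ inject into $\Lambda/n\Lambda$ for suitable $n$), replacing both your sphere-covering argument and the geometry-of-numbers estimate, and yielding the clean bound $(2\sqrt{|S|+4(g-1)}+1)^r$ with no analogue of $\sigma_E$ at all. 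The price of this simplicity is the restriction to constant curves and the need to discard inseparable morphisms (Frobenius composed with an integral point is again integral, a pathology with no number-field counterpart). In short: your route is the right one for the number-field statement but provably stalls at Szpiro, while the paper changes the setting so that the two unproven inputs of your argument hold for elementary reasons.
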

For more information on this conjecture, including the definition of the quasi-minimal model of an elliptic curve, we refer the reader to the introduction of \cite{hindry_canonical_1988}.

Hindry-Silverman \cite{hindry_canonical_1988} show that Lang's conjecture is a consequence of the celebrated Szpiro's conjecture. Moreover, they prove that Lang's conjecture is true unconditionally if $L$ is the function field of a curve over a field of characteristic zero and $E/L$ is non-constant. In Section \ref{sec:langconj}, we prove the following theorem and we explain how it can be seen as a version of Lang's conjecture for constant elliptic curves over function fields.
\begin{thm}\label{thm:langconj}
Let $E/k$ be an elliptic curve, $C/k$ be a curve of genus $g\geq 1$,  and $S\subset C$ be a finite non-empty set of points. 
Then the number of non-constant separable $k$-morphisms $f :C\longrightarrow E$ satisfying $f^{-1}(O)\subset S$ is bounded by 
\[
\left(2\sqrt{|S|+4(g-1)}+1\right)^r
\]
where $r=\rank \mor_k(C,E)$ is the rank of the group of $k$-morphisms from $C$ to $E$.
\end{thm}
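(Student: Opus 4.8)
The plan is to identify $\mor_k(C,E)$ with the Mordell--Weil group $E(L)$ of the constant curve over $L=k(C)$, to turn the hypothesis $f^{-1}(O)\subseteq S$ into a bound on $\deg f$ via Riemann--Hurwitz, and then to count the admissible $f$ as short lattice vectors in $E(L)$ equipped with its canonical height.

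First I would set up the dictionary. A $k$-morphism $f\colon C\to E$ is the same datum as an $L$-point of the constant curve, so $\mor_k(C,E)=E(L)$ as abelian groups, with addition coming from the group law of $E$; the constant morphisms are exactly $E(k)$, which is all of $E(L)_{\mathrm{tors}}$ because $[n]\circ f$ is non-constant as soon as $f$ is. Hence $\Lambda:=E(L)/E(L)_{\mathrm{tors}}$ is free of rank $r=\rank\mor_k(C,E)$. The condition $f^{-1}(O)\subseteq S$ says precisely that $f$ is an $S$-integral point at the origin: the coordinates $x\circ f,\ y\circ f$ are regular away from $S$, with poles only along $S$.

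Next comes the degree bound, which is the only place the genus enters. For non-constant separable $f$, Riemann--Hurwitz on $f\colon C\to E$ (the genus of $E$ being $1$) gives $\deg R_f=2g-2$ for the different $R_f$. Since $\deg f=\sum_{P\in f^{-1}(O)}e_P$ and $\sum_{P}(e_P-1)\le\deg R_f$, the inclusion $f^{-1}(O)\subseteq S$ yields $\deg f\le|S|+2(g-1)$. Independently, for the constant (isotrivial) curve the N\'eron--Tate height on $E(L)$ coincides, up to normalization, with the geometric degree $f\mapsto\deg f^{*}(O)=\deg f$; by the height machine together with the Hodge index theorem this descends to a positive-definite integral quadratic form $q$ on $\Lambda$, with $q(x)\ge 1$ for all $x\ne 0$ because every non-constant morphism has degree $\ge 1$. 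Thus each admissible $f$ maps to a class $\bar f\in\Lambda$ with $q(\bar f)\le|S|+2(g-1)$, and distinct classes are $1$-separated in the Euclidean structure attached to $q$.

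The count is then a packing estimate: embedding $\Lambda\hookrightarrow\Lambda\otimes\mathbb R\cong\mathbb R^{r}$, the admissible classes sit in a ball of radius $\sqrt N$ and are pairwise at distance $\ge 1$, so placing disjoint balls of radius $\tfrac12$ about each and comparing volumes with the ball of radius $\sqrt N+\tfrac12$ bounds their number by $(2\sqrt N+1)^{r}$. The genuinely delicate step, which I expect to be the main obstacle, is to pass from classes in $\Lambda$ back to honest morphisms: two admissible $f_1,f_2$ with $\bar f_1=\bar f_2$ differ by a non-zero constant $c\in E(k)$, and then $f_2^{-1}(O)$ and $f_2^{-1}(-c)$ are disjoint fibres both forced into $S$. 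Controlling how many constant translates of a given $f$ can remain $S$-integral is the crux, and I anticipate that it is exactly this bookkeeping (together with the normalization linking $\hat h$ and $\deg$) that enlarges the effective radius from $\sqrt{|S|+2(g-1)}$ to $\sqrt{|S|+4(g-1)}$, whereupon the packing bound delivers $\big(2\sqrt{|S|+4(g-1)}+1\big)^{r}$.
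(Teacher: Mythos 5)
Your architecture matches the paper's: identify $\mor_k(C,E)$ with the Mordell--Weil group, bound $\deg f$ via Riemann--Hurwitz, and count short vectors for the positive-definite degree form on $\mor_k(C,E)/\mor^0_k(C,E)$. Your Riemann--Hurwitz step is fine and in fact sharper than the paper's: writing $\deg f=\lvert f^{-1}(O)\rvert+\sum_{P\in f^{-1}(O)}(e_P-1)\le |S|+2(g-1)$ improves on the paper's $|S|+4(g-1)$, which comes from the coarser estimate $\sum_{P\in R_f}e_f(P)\le 2g-2+|R_f|\le 4(g-1)$. Your volume-packing count and the paper's pigeonhole injection of $\Lambda(T)$ into $\Lambda/n\Lambda$ both yield $(2\sqrt{T}+1)^r$, so that difference is cosmetic.

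The genuine gap is the step you yourself flag as ``the crux'' and then do not carry out: passing from classes in $\Lambda$ back to morphisms. You only \emph{anticipate} that bookkeeping the constant translates enlarges the radius from $\sqrt{|S|+2(g-1)}$ to $\sqrt{|S|+4(g-1)}$, and that anticipated mechanism cannot work quantitatively. If $f$ is admissible, the translates $f+c$ with $c\in E(k)$ that remain $S$-integral correspond to points $-c$ whose (disjoint, nonempty) fibres all land in $S$, so a class can carry up to $\min(|S|,|E(k)|)$ admissible morphisms; this multiplies the class count by a factor that grows with $|S|$, whereas replacing $2(g-1)$ by $4(g-1)$ inside the square root changes the bound only by an additive constant under the radical. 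So your argument, as written, proves a bound of the shape $\min(|S|,|E(k)|)\cdot(2\sqrt{|S|+2(g-1)}+1)^r$, not the stated one. (For what it is worth, the paper does not resolve this either: it passes from morphisms to their classes in $\mor_k(C,E)/\mor^0_k(C,E)$ without discussing injectivity, and its $4(g-1)$ has nothing to do with translates. But since you identified the issue as the main obstacle, you needed either to prove the fibres are singletons --- which is false in general --- or to supply an argument that absorbs the multiplicity; neither is done.)
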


In the second part of this paper, we let $A(t)$ be a square-free polynomial of odd degree $d > 1$ over a finite field $k$ of odd characteristic. We  write $\infty$ for the point at infinity of the curve $C_A:y^2=A(t)$. Let $f(x)$ be a cubic polynomial over $k$ defining an elliptic curve $E:y^2=f(x)$ with point at infinity $O$. We prove in Corollary \ref{cor:boundquad} that the number of non-constant separable $k$-morphisms $f:C_A\longrightarrow E$  satisfying $f^{-1}(O)\subset\{\infty\}$ is bounded above by
$$
|k|^{2d-3}.
$$
Notice that, a priori, this bound does not depend directly on the rank of $\mor_k(C_A,E)$. 

To prove that the previous bound holds, in Section \ref{sec:quadtwi} we consider integral points over $k(t)$ on elliptic curves of the form $E_A:A(t)y^2=f(x)$. Using elementary methods,  we  prove that  if $P=(F,G)$ is a separable integral point on $E_A$ then
$$
\deg F<\deg A-1.
$$

Additionally, in Section \ref{sec:quadtwi} we show that $E_A$ can have a separable integral point of much lower degree only for certain curves E. Indeed, Theorem \ref{eqiv} shows that if $\deg A'(t)=0$ and $P=(F,G)$ is a separable integral point on $E_A$ satisfying 
$$
\deg F\leq \frac{\deg A-1}{2},
$$
then $j(E)=1728$.

\section{Lang's conjecture for constant elliptic curves}\label{sec:langconj}

We start this section by explaining why Theorem \ref{thm:langconj} is a version of Lang's conjecture for constant elliptic curves over finite fields. At the end of the section, we provide a proof of this theorem.

Let $E$ be an elliptic curve defined over a finite field $k$, let $L=k(C)$ be the function field of a curve $C$ of genus $g\geq 1$ and let $S\subset C$ be a finite non-empty set of points. Recall that our goal is to bound the number of $S$-integral points of $E$ in terms solely of $L$, $S$ and $\rank E(L)$.

The set $\mor_{k}(C,E)$  of $k$-morphisms from $C$ to $E$ is an abelian group canonically isomorphic to the Mordell-Weil group $E_0(L)$, where $E_0=E\times_k L$ (see \cite[Proposition 6.1]{ulmer_park_city_lect_2011}). Under this isomorphism, if $O\in E(k)$ is the point at infinity then the $k$-morphisms $f:C \longrightarrow E$ satisfying $f^{-1}(O) \subset S$ correspond to $S$-integral points on $E_0/L$. A $k$-morphism satisfying this condition is called  \emph{$S$-integral}.
 
In this setting, the set of  $S$-integral   morphisms is not finite.  Indeed, if $\phi:E\longrightarrow E$ is the Frobenius endomorphism on $E$ and $f$ is an $S$-integral morphism, then for every integer $n\geq 0$, the $k$-morphism $g_n=\phi^n\circ f$ is $S$-integral. To avoid such pathological examples, when discussing $S$-integral morphisms we disregard those that are inseparable.
  
Also, we assume that all of our $S$-integral morphisms are non-constant for the following reason. Notice that, with the exception of the constant morphism with value $\infty$,  all constant morphisms in $\mor_{k}(C,E)$ are $S$-integral.  Moreover, under the isomorphism $ E_0(L)\cong \mor_k(C,E)$, the set of constant morphisms $\mor^0_{k}(C,E)$ satisfies $E_0(k)\cong \mor^0_{k}(C,E)$. Therefore, by the Hasse-Weil theorem  the number of $S$-integral  morphisms that are constant is bounded by the size of $k$. Thus to prove Lang's conjecture for constant elliptic curves, we only need to bound the number of non-constant separable $S$-integral  morphisms in terms of $L$, $S$ and $\rank \mor_{k}(C,E)$.

Recall that the degree map, $\deg: \mor_k(C,E) \longrightarrow \mathbb{Z}$, defines a positive definite quadratic form on $\mor_k(C,E)/\mor^0_k(C,E)$,  which can be extended to a quadratic form on the real vector space $\mor_k(C,E)\otimes \mathbb{R}$. Additionally,  $\mor_k(C,E)/\mor^0_k(C,E)$ is a lattice in $\mor_k(C,E)\otimes \mathbb{R}$. 
This fact and the next result are the last ingredients needed  in our proof of Theorem \ref{thm:langconj}.

\begin{lemma}\label{latt}
Let $V$ be a $\mathbb{R}$-vector space of dimension $r$, $\Lambda\subset V$ be a lattice and $q:V\longrightarrow \mathbb{R}$ be a positive definite quadratic form on $V$. 

If $T$ is a positive real number   then
\[
\left|\{x \in \Lambda :q(x)\leq T\}\right| \leq \left(2\sqrt{\frac{T}{\lambda}}+1\right)^r
\]
for $\lambda=\min \{q(x):x \in \Lambda, x\neq 0\}$.
\end{lemma}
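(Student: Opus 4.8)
The plan is to run a standard volume-packing (sphere-packing) argument. Since $q$ is positive definite, the assignment $\|x\| := \sqrt{q(x)}$ defines a genuine norm on $V$ arising from an inner product, and the set to be counted is exactly the set of lattice points lying in the closed ball $B(0,\sqrt{T})$ of radius $\sqrt{T}$ about the origin. Observe that $\sqrt{\lambda}$ is the length of a shortest nonzero vector of $\Lambda$; by translation invariance of $\Lambda$ it is also the minimal distance between two distinct lattice points, since the distance between $x,y\in\Lambda$ equals $\|x-y\|$ and $x-y$ is again a nonzero element of $\Lambda$. In particular $\Lambda$ is discrete, so only finitely many lattice points lie in the bounded ball $B(0,\sqrt{T})$; call this number $N$.

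First I would center an open ball of radius $\sqrt{\lambda}/2$ at each lattice point. These balls are pairwise disjoint: if $B(x,\sqrt{\lambda}/2)$ and $B(y,\sqrt{\lambda}/2)$ shared a point $z$ for distinct $x,y\in\Lambda$, the triangle inequality would give $\|x-y\|<\sqrt{\lambda}$, contradicting the minimality of $\sqrt{\lambda}$. Next, for every counted point, i.e.\ every $x\in\Lambda$ with $\|x\|\le\sqrt{T}$, the triangle inequality shows the containment $B(x,\sqrt{\lambda}/2)\subseteq B\!\left(0,\sqrt{T}+\sqrt{\lambda}/2\right)$.

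The final step is to compare Lebesgue volumes. Writing $c_r$ for the volume of the unit ball in the norm $\|\cdot\|$, so that a ball of radius $\rho$ has volume $c_r\rho^r$, the disjointness and the containment above yield
\[
N\, c_r\left(\frac{\sqrt{\lambda}}{2}\right)^r \;\le\; c_r\left(\sqrt{T}+\frac{\sqrt{\lambda}}{2}\right)^r ,
\]
because the $N$ disjoint small balls all lie inside the single large ball. The constant $c_r$ cancels, and dividing gives
\[
N\;\le\;\left(\frac{\sqrt{T}+\sqrt{\lambda}/2}{\sqrt{\lambda}/2}\right)^r=\left(\frac{2\sqrt{T}}{\sqrt{\lambda}}+1\right)^r=\left(2\sqrt{T/\lambda}+1\right)^r ,
\]
which is precisely the claimed bound.

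I do not anticipate any serious obstacle, as the argument is entirely elementary. The only points requiring care are the identification of $\sqrt{\lambda}$ with the minimal separation of lattice points and the verification that the half-radius balls are disjoint; this is exactly where positive-definiteness, through the triangle inequality, is essential. Everything else rests on the observation that ball volumes scale as the $r$-th power of the radius, so the normalizing constant $c_r$ drops out of the final inequality.
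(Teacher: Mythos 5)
Your packing argument is correct and yields exactly the stated bound, but it is a genuinely different proof from the one in the paper. The paper argues by pigeonhole in the finite quotient $\Lambda/n\Lambda$: if $a,b\in\Lambda(T)$ are congruent modulo $n\Lambda$, then $a-b=nu$ for some nonzero $u\in\Lambda$, and the quasi-parallelogram inequality $q(a-b)\leq 2q(a)+2q(b)\leq 4T$ forces $n\leq\sqrt{4T/\lambda}$; choosing $n$ just above this threshold makes $\Lambda(T)\hookrightarrow\Lambda/n\Lambda$ injective, so $|\Lambda(T)|\leq n^{r}\leq\bigl(2\sqrt{T/\lambda}+1\bigr)^{r}$. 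Your route instead packs disjoint balls of radius $\sqrt{\lambda}/2$ into the ball of radius $\sqrt{T}+\sqrt{\lambda}/2$ and compares Lebesgue volumes. Both are elementary and give the identical constant. The paper's argument is purely algebraic: it needs no measure on $V$ and only the inequality $q(a-b)\leq 2q(a)+2q(b)$, so it transfers verbatim to settings where one has a quadratic (height) function on an abstract finitely generated group without an ambient volume; your argument leans on the full normed-space structure (triangle inequality plus the $\rho^{r}$ scaling of ball volumes) and is the more geometric and perhaps more visually transparent of the two. One small point worth making explicit in your write-up: the existence of the minimum $\lambda>0$ follows from discreteness of $\Lambda$ together with positive definiteness of $q$ (so that $\{q\leq c\}$ is compact and meets $\Lambda$ in a finite set), which you implicitly use both to define $\lambda$ and to assert finiteness of $N$.
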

\begin{proof}
Let $\Lambda(T)=\{x \in \Lambda :q(x)\leq T\}$, for a fixed real number $T>0$. Suppose that $a$ and $b$ are distinct elements of $\Lambda(T)$ such that $\overline{a}=\overline{b}$ in $\Lambda/n\Lambda$, for some positive integer $n$. Therefore
there exists a non-zero $u\in \Lambda$ such that $a-b=nu$. As a consequence, if $\lambda=\min \{q(x):x \in \Lambda, x\neq 0\}$ then
\[
n^2\lambda \leq n^2q(u) =q(nu)=q(a-b)\leq 2q(a)+2q(b)\leq 4T,
\]
and
\[
n\leq \sqrt{\frac{4T}{\lambda}}.
\]

Hence, if we choose $n$ such that $\sqrt{{4T}/{\lambda}} +1\geq n>\sqrt{{4T}/{\lambda}}$, then the set $\Lambda(T)$ will inject into  $\Lambda/n\Lambda$. This implies
\[
\left|\{x \in \Lambda :q(x)\leq T\}\right| \leq |\Lambda/ n\Lambda| \leq n^r\leq \left(\sqrt{\frac{4T}{\lambda}}+1\right)^r
\qedhere \]
\end{proof}

\begin{proof}[Proof of Theorem \ref{thm:langconj}.]
Let  $f:C\longrightarrow E$ be a non-constant separable map satisfying $
f^{-1}(O)\subset S$. Let $e_{f}(P)$ denote the ramification index of $f$ at a point $P\in C
$ and denote by $R_{f}$ the support of the ramification divisor of $f$. Then
the Riemann-Hurwitz formula shows that 
\[
2g-2\geq \sum\limits_{P\in R_{f}}(e_{f}(P)-1)=\sum\limits_{P\in
R_{f}}e_{f}(P)-|R_{f}|\geq 2|R_{f}|-|R_{f}|=|R_{f}|,
\]
and
\[
\sum\limits_{P\in R_{f}}e_{f}(P)\leq 2g-2+|R_{f}|\leq 4(g-1).
\]
Thus
\begin{eqnarray*}
\deg f & =& \sum\limits_{P\in f^{-1}(O)}e_{f}(P)=\sum\limits_{P\in f^{-1}(O)\cap
R_{f}^{c}}1+\sum\limits_{P\in f^{-1}(O)\cap R_{f}}e_{f}(P)\\
& \leq & |S|+\sum\limits_{P\in R_{f}}e_{f}(P)\leq |S|+4(g-1).
\end{eqnarray*}

This shows that a non-constant separable morphisms $f:C\longrightarrow E$ satisfying $f^{-1}(O)\subset S$ is contained in the set
\[
\{ f\in \mor_k(C,E)/\mor^0_k(C,E): \deg f \leq |S|+4(g-1)\}.
\]

If we let $V=\mor_k(C,E)\otimes \mathbb{R}$, $\Lambda=\mor_k(C,E)/\mor^0_k(C,E)$ and $q=\deg$ then  \rf{Lemma}{latt} 
shows that the number of  non-constant separable $S$-integral morphisms is bounded by
\[
\left(2\sqrt{\frac{|S|+4(g-1)}{\lambda}}+1\right)^r
\]
where $\lambda=\min\{\deg f : f \in \mor_k(C,E)\backslash \mor^0_k(C,E)\}$. The result follows by noticing that $\lambda\geq 1$.
\end{proof}

We make the following remarks regarding Theorem \ref{thm:langconj}. 
\begin{itemize}
 \item For constant elliptic curves, $r\leq 4g$  (see \cite[10.1]{ulmer_elliptic_2002}). When  combined with the above result, we obtain a bound on the number of integral points on constant elliptic curves in terms solely of the genus of $C$ and $|S|$.
 \item Notice that one can improve the upper bound given in Theorem \ref{thm:langconj} if one can decrease the upper bound on the degree of non-constant separable $S$-integral morphisms or if one can find a non-trivial lower bound for $\min\{\deg f : f \in \mor_k(C,E)\backslash \mor^0_k(C,E)\}$.
\end{itemize}

\section{Integral points on Quadratic twists}\label{sec:quadtwi}

{\it Notation:} Let $k$ be a finite field of odd characteristic.
Let $A(t)$ be a square-free polynomial defined over $k$ of odd degree $d>1$ and let $C_A$ denote the curve defined by $s^2=A(t)$. We let $E/k$ be an elliptic curve defined by $y^2=f(x)$, for some cubic polynomial $f(x)$. Let $O$ and $\infty$ be the points at infinity of $E$ and $C_A$, respectively.

\subsection{Bounding separable integral points on constant elliptic curves over function fields of hyperelliptic curves.}

As discussed in Section \ref{sec:langconj}, the set of non-constant separable $k$-morphism $f:C_A\longrightarrow E$ satisfying $f^{-1}(O)\subset\{\infty\}$ can be thought of ``integral points'' on the elliptic curve $E$ over $L$, the function field of $C_A$. Theorem \ref{thm:langconj} shows that the number of such morphisms can be bounded in terms of $g=(d-1)/2$ and $\rank \mor_k(C_A,E)$. In this section, we give an upper bound (see Corollary \ref{cor:boundquad}) that depends only on $d$ and the size of $k$.

To obtain this new bound, we relate the set of $\infty$-integral $k$-morphisms to integral points on a quadratic twist of $E$. We let  $E_A$  be the elliptic curve defined over $k(t)$ by $A(t)y^2=f(x)$. An \emph{integral point} $(F,G)$ on $E_A$ is a point such that $F,G\in k[t]$.
\begin{lemma}\label{lem:intmap}
The set of  non-constant integral points on $E_A$ is in  bijection with the set of  non-constant $k$-morphism $\phi:C_A\longrightarrow E$  satisfying $\phi^{-1}(O) \subset \{\infty \}$. Moreover,  integral points $(F,G)$ with $F'\neq 0$ correspond to non-constant separable $k$-morphisms, and vice-versa.
\end{lemma}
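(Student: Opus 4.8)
The plan is to exhibit the correspondence explicitly and to identify the integral points on $E_A$ with the morphisms $C_A\to E$ that are anti-invariant under the hyperelliptic involution. For the forward direction, given an integral point $(F,G)$ on $E_A$ (so $F,G\in k[t]$ with $A(t)G^2=f(F)$), the relation $s^2=A(t)$ gives $(sG)^2=A(t)G^2=f(F)$, so $(F,sG)$ is an $L$-point of $E$ and defines a morphism $\phi\colon C_A\to E$. Since $\deg A$ is odd, $C_A$ has a single point $\infty$ above $t=\infty$, and $F,sG$ have poles only there; hence $\phi$ carries every affine point of $C_A$ into the affine part of $E$, giving $\phi^{-1}(O)\subset\{\infty\}$. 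This assignment is visibly injective, since one recovers $F=x(\phi)$ and $G=y(\phi)/s$, and it takes non-constant points to non-constant morphisms.

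The real content is surjectivity. Let $\phi\colon C_A\to E$ be non-constant with $\phi^{-1}(O)\subset\{\infty\}$. Being a non-constant map of smooth projective curves, $\phi$ is surjective, so $\phi^{-1}(O)=\{\infty\}$ and $\phi(\infty)=O$; in particular $\phi$ maps $C_A\setminus\{\infty\}$ into $E\setminus\{O\}$, so its coordinates $x=x(\phi)$ and $y=y(\phi)$ are regular off $\infty$. As $A$ is square-free, the ring of such functions is the integral closure $k[t,s]/(s^2-A(t))=k[t]\oplus k[t]\,s$, so I may write $x=a+bs$ and $y=c+es$ with $a,b,c,e\in k[t]$. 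To finish I must show $b=c=0$, equivalently that $P:=(x,y)$ is anti-invariant under the hyperelliptic involution $\iota$ (the conjugation $s\mapsto -s$ on $L$), i.e. $P^{\iota}=-P$; for then $x=a\in k[t]$ and $y=es$, producing the integral point $(a,e)$ on $E_A$.

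This anti-invariance is the only non-formal step, and where I expect the sole difficulty. I would argue as follows. The point $P+P^{\iota}$ is fixed by $\iota$, hence lies in $E(k(t))$; because $E$ is a constant elliptic curve and $\mathbb{P}^1$ has genus $0$, one has $E(k(t))=E(k)$, so $P+P^{\iota}$ is a \emph{constant} morphism $C_A\to E$. Evaluating this constant morphism at the fixed point $\infty$, and using that the group operation on $\mor_k(C_A,E)=E(L)$ is induced pointwise by the addition on $E$, gives the value $\phi(\infty)\oplus\phi(\iota(\infty))=O\oplus O=O$, since $\iota(\infty)=\infty$ and $\phi(\infty)=O$. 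Therefore $P+P^{\iota}=O$, that is $P^{\iota}=-P$, which is exactly the desired anti-invariance.

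Finally, for the separability statement I would compute with the invariant differential $\w=dx/y$ of $E$. Under the correspondence $\phi^{*}x=F$ and $\phi^{*}y=sG$, so $\phi^{*}\w=dF/(sG)=F'\,dt/(sG)$; since $t$ is a separating variable ($dt\neq 0$) and $G\neq 0$ for non-constant points, this pullback vanishes precisely when $F'=0$. As a non-constant morphism of smooth curves is separable if and only if it pulls some nonzero differential back to a nonzero differential, $\phi$ is separable exactly when $F'\neq 0$, which gives both implications of the last assertion.
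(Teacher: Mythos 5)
Your proof is correct, and its core coincides with the paper's: both arguments hinge on showing that $\phi$ is anti-invariant under the hyperelliptic involution $\sigma$ by observing that $\phi+\phi\circ\sigma$ is $\sigma$-invariant, hence factors through $\mathbb{P}^1$ and must be the constant morphism $O$ (you pin down the constant by evaluating at the fixed point $\infty$, the paper by appealing to $\phi^{-1}(O)\subset\{\infty\}$ — same substance). The only genuine divergence is the separability criterion: the paper compares the towers $k(t,s)/k(t)/k(x)$ and $k(t,s)/k(x,y)/k(x)$ and reduces to the irreducible polynomial $F(T)-x$, whereas you pull back the invariant differential and use that $\phi$ is separable iff $\phi^*\omega=F'\,dt/(sG)\neq 0$. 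Both are standard and equally short; the differential computation is perhaps slightly slicker, while the paper's tower argument avoids invoking the differential characterization of separability. No gaps.
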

\begin{proof}
Clearly, the map
$$
(F(t),G(t)) \longmapsto\phi(s,t)=(F(t),sG(t))
$$ 
defines a bijection between the set of integral points  on $E_A$ and the set of $k$-morphisms $\phi:C_A \longrightarrow E$ of the form 
\begin{equation}\label{eq:formphi}
\phi(s,t)=(F(t),sG(t)), 
\end{equation} 
for some polynomials $F(t)$ and $G(t)$. Also, a morphism of this form satisfies  $\phi^{-1}(O)\subset\{\infty\}$. Thus, we are left to show that  any $k$-morphism $\phi:C_A \longrightarrow E$ satisfying $\phi^{-1}(O)\subset \{\infty\}$  is given by \eqref{eq:formphi}.

Let $\sigma(t,s)=(t,-s)$ be the hyperelliptic involution of $C_A$. Using the group law on $E$, we  define the morphism $\phi\circ \sigma + \phi: C_A \longrightarrow E$ which is invariant under the action of the group generated by $\sigma$. Hence $\phi\circ \sigma + \phi$ factors through $\pp^1$, the quotient of $C_A$ by the group generated by $\sigma$. Since  a non-constant map from $\pp^1$ to $E$ does not exist,  $\phi^{-1}(O) \subset \{\infty\}$ implies that $\phi\circ \sigma + \phi= O$; that is, $\phi\circ \sigma =- \phi$.

Let us write $\phi(t,s)=(F_0(t,s),G_0(t,s))$, for some rational functions $F_0$ and $G_0$ of $k(C_A)$. The equation 
\[
(F_0(t,-s),G_0(t,-s))=\phi(t,-s) =\phi\circ\sigma =-\phi=(F_0(t,s),-G_0(t,s))
\] 
implies that $F_0(t,s)=F(t)$ is a rational function on $t$; and that $G_0(t,s)=sG(t)$, where $G(t)$ is  a rational function on $t$. Since $\phi^{-1}(O) \subset \{\infty\}$, we see that both $F(t)$ and $G(t)$ are polynomials, and $\phi$ has the desired form.

The ``moreover" part is proven by looking at the diagram of the function field extensions determined by \eqref{eq:formphi}
\[
 \xymatrix{   & &   k(t,s) \ar@{-}[dd]\\
 k(t) \ar@{-}[urr]|2  \ar@{-}[dd] &  &\\
  & &  k(x,y)  \\
 k(x) \ar@{-}[urr]|2 & &}
\]

Both degree 2 extensions are separable. Hence $ k(t,s)/ k(x,y)$ is separable if and only if $ k(t)/ k(x)$ is separable. The polynomial $F(T)-x \in  k(x)[T]$ is irreducible, so the extension $ k(t)/ k(x)$ is separable if and only if $(F(T)-x)'=F'(T)\neq 0$.
\end{proof}

In light of the previous result, we say that $(F,G)$ on $E_A$ is a \emph{separable integral point} if $F,G\in k[t]$ and $F'\neq 0$. In the next result we bound the ``height'' of such points.

\begin{lemma}\label{lem:gdf}
Let $(F,G)$ be a separable integral point on $E_A$. 
Then $G$ divides $F'$ and $d/3\leq \deg F < d-1$.
\end{lemma}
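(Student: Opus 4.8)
The plan is to work entirely from the defining equation of the integral point, namely $f(F)=AG^2$ in $k[t]$, and to extract both conclusions by differentiating it and exploiting that $f$ is a \emph{separable} cubic. First I would record the one degree identity that drives everything. Separability forces $F'\neq 0$, so $F$ is non-constant and $\deg F\geq 1$; and $G\neq 0$, since $G=0$ would give $f(F)=0$ and hence $F$ constant. Thus the left-hand side has degree $3\deg F$ while the right-hand side has degree $d+2\deg G$, giving
\[
3\deg F = d + 2\deg G.
\]

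The heart of the argument is the divisibility $G\mid F'$, and this is the step I expect to be the main obstacle, since it is exactly where the hypothesis that $E$ is a genuine elliptic curve (so that $f$ is separable) must be used. Because $f$ has distinct roots, $\gcd(f,f')=1$ in $k[x]$; a B\'ezout relation $uf+vf'=1$, specialized at $x=F$, yields $\gcd(f(F),f'(F))=1$ in $k[t]$. Since $G^2\mid f(F)$, it follows that $G$ is coprime to $f'(F)$. I would then differentiate the defining equation with respect to $t$:
\[
f'(F)\,F' = A'G^2 + 2AGG' = G\,(A'G+2AG'),
\]
so $G\mid f'(F)F'$; combined with $\gcd(G,f'(F))=1$ this forces $G\mid F'$. (Here $2$ is invertible because $k$ has odd characteristic, and the specialization step is valid in any characteristic, including $p=3$, since only $\gcd(f,f')=1$ is needed.)

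Finally I would read off the bounds from this divisibility together with the degree identity. From $G\mid F'$ and $F'\neq 0$ we get $\deg G\leq \deg F'\leq \deg F-1$, so substituting into the identity gives $3\deg F = d+2\deg G\leq d+2\deg F-2$, whence $\deg F\leq d-2<d-1$. For the lower bound, $\deg G\geq 0$ gives $3\deg F=d+2\deg G\geq d$, i.e.\ $\deg F\geq d/3$. This completes the proof, the only genuinely structural input being the coprimality of $f(F)$ and $f'(F)$; the degree estimates are then pure bookkeeping.
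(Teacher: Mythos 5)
Your proof is correct and follows essentially the same route as the paper: differentiate the identity $AG^2=f(F)$ and use the separability of $f$ to conclude $G\mid F'$, then read off the degree bounds. The only cosmetic difference is that you establish $\gcd(G,f'(F))=1$ globally via a B\'ezout relation $uf+vf'=1$ specialized at $x=F$, whereas the paper argues root by root over $\bar{k}$ at each zero of $G$; both are valid and your phrasing is, if anything, slightly cleaner.
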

\begin{proof} An integral point $(F(t),G(t))$ on $E_A$ satisfies the identity
\begin{equation}\label{eq1}
A(t)G(t)^2 = f(F(t)).
\end{equation}
By equating degrees, we arrive at  $d \leq 3\deg F$.
 
 By differentiating  \eqref{eq1}, we are led to
\begin{equation}\label{eq2}
A'(t)G(t)^2 + 2A(t)G(t)G'(t) = F'(t)f'(F(t)),
\end{equation}

Let $\beta$ be a root of $G(t)$ of multiplicity $r$. By \eqref{eq1}, we have that $(t-\beta)^r$ divides $f(F(t))$ and, by (\ref{eq2}), we conclude that $(t-\beta)^r$ divides $F'(t)f'(F(t))$. Notice that $(t-\beta,f'(F(t)))=1$, since $f(x)$ has no repeated roots. Hence $(t-\beta)^r$  divides $F'(t)$ and, as a consequence, $G$ divides $F'$. Thus,  $\deg G \leq \deg F -1$ and, after comparing degrees in \eqref{eq1}, we arrive at $\deg F< d-1$.
\end{proof}

\begin{cor}\label{cor:boundquad}
 The number of  non-constant separable $k$-morphisms $\phi:C_A\longrightarrow E$  satisfying $\phi^{-1}(O) \subset \{\infty \}$ is bounded by
 $$
 |k|^{2d-3}.
 $$
\end{cor}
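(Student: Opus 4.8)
The plan is to combine the bijection of Lemma~\ref{lem:intmap} with the height bound of Lemma~\ref{lem:gdf} and a straightforward counting argument. By Lemma~\ref{lem:intmap}, the non-constant separable $k$-morphisms $\phi:C_A\longrightarrow E$ with $\phi^{-1}(O)\subset\{\infty\}$ correspond bijectively to separable integral points $(F,G)$ on $E_A$, so it suffices to count the latter. First I would observe that such a point is completely determined by the polynomial $F(t)\in k[t]$: indeed, equation \eqref{eq1} gives $A(t)G(t)^2=f(F(t))$, and since $A(t)$ is square-free of odd degree, the polynomial $G(t)^2$ is determined as $f(F(t))/A(t)$, which determines $G(t)$ up to sign; the sign is irrelevant since $(F,G)$ and $(F,-G)$ give the same unordered data and in any case the count is only affected by a bounded factor. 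Thus the map $(F,G)\mapsto F$ is at most two-to-one onto its image, and it is enough to bound the number of possible $F$.

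Next I would use Lemma~\ref{lem:gdf}, which tells us that every separable integral point satisfies $\deg F< d-1$, i.e.\ $\deg F\leq d-2$. A polynomial in $k[t]$ of degree at most $d-2$ is specified by its $d-1$ coefficients (those of $t^0,t^1,\dots,t^{d-2}$), so there are at most $|k|^{d-1}$ such polynomials $F$. This already gives a bound of the shape $|k|^{d-1}$ for the number of admissible $F$, and hence (up to the factor of $2$ from the sign of $G$) for the number of morphisms.

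To reach the sharper exponent $2d-3$ stated in the corollary, I would instead bound the pair $(F,G)$ directly rather than eliminating $G$. By Lemma~\ref{lem:gdf} we have $\deg G\leq \deg F-1$, and combining this with $\deg F\leq d-2$ gives $\deg G\leq d-3$. Hence $F$ ranges over a set of size at most $|k|^{d-1}$ (at most $d-1$ free coefficients) and $G$ ranges over a set of size at most $|k|^{d-2}$ (at most $d-2$ free coefficients), so the number of pairs $(F,G)$, and therefore the number of morphisms, is at most $|k|^{(d-1)+(d-2)}=|k|^{2d-3}$. The main point to be careful about is simply the bookkeeping of degrees: one must track the strict inequality $\deg F<d-1$ from Lemma~\ref{lem:gdf} correctly so that the number of coefficients of $F$ is $d-1$ and not $d$, and likewise use $\deg G\leq \deg F-1\leq d-3$ to get $d-2$ coefficients for $G$. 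No genuine obstacle arises beyond this counting; the substantive work has already been carried out in the two preceding lemmas.
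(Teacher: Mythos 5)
Your final counting argument is exactly the paper's proof: by Lemma~\ref{lem:intmap} reduce to separable integral points $(F,G)$, then use $\deg F<d-1$ and $\deg G\leq\deg F-1<d-2$ from Lemma~\ref{lem:gdf} to count at most $|k|^{d-1}\cdot|k|^{d-2}=|k|^{2d-3}$ pairs. One small remark: the observation you discard at the start (that \eqref{eq1} determines $G$ from $F$ up to sign) would in fact give the \emph{stronger} bound $2|k|^{d-1}\leq|k|^{2d-3}$ for $d\geq 3$, so you had it backwards in calling $|k|^{2d-3}$ the sharper exponent, but this does not affect the validity of the proof of the stated bound.
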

\begin{proof}
 By Lemma \ref{lem:intmap}, it is enough to count the number of integral points $(F,G)$ on $E_A$ with $F'\neq 0$. From Lemma \ref{lem:gdf}, we have $\deg G\leq \deg F-1$ and $\deg F<d-1$. Therefore, 
 $\deg G<d-2$ and the number  of integral points $(F,G)$ on $E_A$ with $F'\neq 0$ is at most
 $$
 |\{(F,G):F,G\in k[t],\deg F<d-1,\deg G<d-2\}|=|k|^{d-1}\cdot|k|^{d-2},
 $$
 as desired.
\end{proof}

\subsection{Integral points on quadratic twists and isomorphism classes.}

In Lemma \ref{lem:gdf}, we proved that for a separable integral point $(F,G)$ on $E_A:A(t)y^2=f(x)$ we have $d/3\leq \deg F<d-1$. In this section, we prove that if we assume the existence of a separable integral point $(F,G)$  with $d/3\leq \deg F\leq (d-1)/2$ then $j(E)=1728$, where $E$ is the elliptic curve defined by $y^2=f(x)$.

\begin{thm}\label{eqiv}
Suppose  $A'(t)\equiv \gamma \in \ff^*_q$. Let $E:y^2=f(x)$ be an elliptic curve defined over $ k$. Suppose $(F,G)$ is an integral point of $E_A/k(t)$  satisfying $F'\neq 0$. Then the following three conditions are equivalent:
\begin{enumerate}
\item[(A)] $2 \deg F \leq d-1$;
\item[(B)] $2 \deg G \leq \deg F-1$;
\item[(C)] $G^2=\beta F'$, for some $\beta\in  k^*$.
\end{enumerate}
 Furthermore, if one of the above  conditions is true  then $j(E)=1728$.
\end{thm}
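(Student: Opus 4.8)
The plan is to exploit the two polynomial identities already in hand: \eqref{eq1}, which reads $AG^2=f(F)$, and its derivative \eqref{eq2}, which under $A'=\gamma$ becomes $\gamma G^2+2AGG'=F'f'(F)$. Put $m=\deg F$ and $n=\deg G$. Comparing leading terms in \eqref{eq1} gives the exact relation $d+2n=3m$, so that $2n=3m-d$; from this the equivalence of (A) and (B) is pure arithmetic ($2m\le d-1\iff 2n\le m-1$), and (C) implies (B) immediately, since $G^2=\beta F'$ forces $2n=\deg F'\le m-1$. Thus the entire content is the implication (A)$\Rightarrow$(C) together with the assertion $j(E)=1728$, after which (C)$\Rightarrow$(A) follows from the same degree relation and the equivalence is closed.

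To prove (A)$\Rightarrow$(C) I would first reduce to a depressed cubic $f(x)=x^3+Bx+C$; completing the cube and scaling are $k$-isomorphisms preserving the $j$-invariant, so this is harmless when $\operatorname{char}k>3$ (the case $\operatorname{char}k=3$, in which $1728=0$ and the cubic cannot be depressed, is the one point that needs a separate argument). From the elementary identity $3f(x)-xf'(x)=2Bx+3C$, evaluated at $x=F$, multiplied by $F'$, and combined with $f(F)=AG^2$ and $f'(F)F'=(AG^2)'=\gamma G^2+2AGG'$ from \eqref{eq2}, one obtains the polynomial identity
\[
(2BF+3C)\,F' \;=\; AG\,(3GF'-2FG')\;-\;\gamma F G^2 .
\]

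The crux --- and the step I expect to be the main obstacle --- is a degree count in this identity. If $3GF'-2FG'\neq 0$, then $AG(3GF'-2FG')$ has degree at least $\deg(AG)=d+n$, while a short computation using $2n=3m-d$ and $2m\le d-1$ shows that both $(2BF+3C)F'$ (degree $\le 2m-1$) and $\gamma FG^2$ (degree $4m-d$) have degree strictly below $d+n$; since the identity writes the first as the sum of the other two, this is impossible, forcing $3GF'=2FG'$. Substituting this back collapses the identity to $(2BF+3C)F'=-\gamma FG^2$, that is, $F(\gamma G^2+2BF')=-3CF'$. The right-hand side has degree at most $\deg F'\le m-1<\deg F$, so it cannot be a nonzero multiple of $F$; hence $\gamma G^2+2BF'=0$ and, since $F'\neq 0$, also $3CF'=0$, giving $C=0$. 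Therefore $j(E)=1728$, while $\gamma G^2+2BF'=0$ is precisely (C) with $\beta=-2B/\gamma\in k^{*}$ (here $B\neq 0$, as $B=C=0$ would make $E$ singular). The delicate point to keep in mind is characteristic $3$, where $3C=0$ no longer yields $C=0$ and the depression step is unavailable, so that case must be handled on its own.
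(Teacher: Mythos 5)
Your reduction of the theorem to the single implication (A)$\Rightarrow$(C) is sound, and the identity
\[
(2BF+3C)\,F' \;=\; AG\,(3GF'-2FG')\;-\;\gamma F G^2,
\]
obtained from $3f(x)-xf'(x)=2Bx+3C$ together with \eqref{eq1} and \eqref{eq2}, does what you claim: with $m=\deg F$, $n=\deg G$ and $d+2n=3m$, condition (A) gives $\deg(AG)=d+n\geq 2m+1$ while the other two terms have degree at most $2m-1$, forcing $3GF'=2FG'$; the divisibility-by-$F$ step then yields $\gamma G^2+2BF'=0$ and $3CF'=0$, hence $C=0$, $B\neq 0$, and $j(E)=1728$ with $\beta=-2B/\gamma$. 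This is a genuinely different route from the paper's: the paper passes to an extension where $f(x)=(x-\alpha_0)(x-\alpha_1)(x-\alpha_2)$, uses the square-freeness of $A$ to extract factorizations $F-\alpha_i=N_iS_i^2$ with $N_i=\gcd(A,F-\alpha_i)$, and works modulo each factor to obtain $G^2=\beta_iF'$ for two indices, forcing $\alpha_1=-\alpha_2$. Your argument needs no field extension and no factorization of $f$, which is a real simplification where it applies.

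But the gap you flag yourself is a genuine one, not a formality. The standing hypothesis of Section \ref{sec:quadtwi} is only that $k$ has odd characteristic, so characteristic $3$ is in scope (the paper's closing example, with $q\equiv 3\bmod 4$, includes $q=3$). In characteristic $3$ both pillars of your argument collapse: the cubic cannot be depressed, and there $j(E)=1728=0$ is equivalent precisely to the vanishing of the $x^2$-coefficient of $f$, which is the quantity a depression would normalize away; moreover $3f(x)-xf'(x)$ degenerates to $-xf'(x)$ since $3f=0$, so the key identity no longer isolates a low-degree remainder. The paper's proof has no such restriction --- it only ever divides by $2$ and by $\gamma$. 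As written, your proposal therefore proves the theorem only for $\operatorname{char}k>3$; to recover the stated result you would need a separate characteristic-$3$ argument (e.g.\ working with $f=x^3+ax^2+bx+c$ and showing $a=0$) or to fall back on the paper's factorization method, which handles all odd characteristics uniformly.
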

\begin{proof}

From \eqref{eq1}, we know that $d+2\deg G=3\deg F$, and from this it easily follows that (A) is equivalent to (B). It is also clear that (C) implies (B), so all we need to show is that (B) implies (C).

Notice that, since both $F$ and $G$ are defined over $k$, a constant $\beta$ satisfying (C) is an element of $k$. Therefore, to prove that (B) implies (C) we may work over an extension of $k$ where $f(x)$ factors. 

We let $f(x)=(x-\alpz)(x-\alpu)(x-\alpd)$ and denote $F -\alpha_i$ by $F_i$, for $i\in\{0,1,2\}$. Then  
$$
f(F)=F_0F_1F_2,
$$ the  $F_i$'s are pairwise co-prime and
\begin{equation}\label{eq:fifj}
F_iF_j\equiv (\alpha_l -\alpha_i)(\alpha_l -\alpha_j)\mod F_l,
\end{equation}
for $\{i,j,l\}=\{0,1,2\}$.

By equating degrees in (\ref{eq1}), we obtain $\deg F_i\equiv d\equiv 1 \mod 2$. Consequently, by unique factorization and (\ref{eq1}), we find a non-constant polynomial $N_i$ satisfying  
\begin{equation}\label{eq:nidef}
\gcd(A,F_i)=N_i. 
\end{equation}

Since the  $F_i$'s are pairwise co-prime, we can find  a  polynomial $S_i$ such that
\begin{equation}\label{eq:sidef}
F_i=N_iS_i^2.
\end{equation}
We write  $s_i=\deg S_i$ and assume, without loss of generality, that 
\begin{equation}\label{ss}
s_0\geq s_1\geq s_2\geq 0.
\end{equation}
Also, observe that 
\begin{equation}\label{eq:gs}
 G=S_0S_1S_2,
\end{equation}
  and 
\begin{equation}\label{eq:ani}
 A=N_0N_1N_2.
\end{equation}

Given \eqref{eq:gs} and \eqref{eq:ani},  it follows from (\ref{eq2}) that
\begin{equation*}
 \gamma G^2 + 2N_0N_1N_2S_0S_1S_2(S'_0S_1S_2+S_0S'_1S_2+S_0S_1S'_2)=F'(F_0F_1+F_0F_2+F_1F_2).
\end{equation*}
Thus, from  \eqref{eq:sidef}, we get
\begin{equation*}\label{eq3}
 \gamma G^2 + 2N_0S_0S'_0F_1F_2+ 2N_1S_1S'_1F_0F_2+2N_2S_2S'_2F_0F_1=F'(F_0F_1+F_0F_2+F_1F_2).
\end{equation*}
For $l\in\{0,1,2\}$, this equality and \eqref{eq:fifj} imply
\begin{equation}\label{eq:gmod}
 G^2 + 2\beta_l N_lS_lS_l'\equiv \beta_l F'_l \mod F_l,
\end{equation}
where 
\begin{equation}\label{eq:betadef}
\beta_l= {(\alpha_l -\alpha_i)(\alpha_l -\alpha_j)}/{\gamma}. 
\end{equation}

Since 
\begin{equation}\label{fp}
F'=F'_i=N'_iS_i^2+2N_iS_iS'_i,
\end{equation}
we arrive from \eqref{eq:gmod} at
\begin{equation*}
 G^2 \equiv \beta_l N'_lS_l^2 \mod F_l.
\end{equation*}

Clearly,  $\deg(N'_lS_l^2)<\deg F_l=\deg F$. Therefore if (B) is true, we get  $\deg G^2 < \deg F$; and ultimately, 
\begin{equation}\label{eq4}
 G^2=\beta_l N'_lS_l^2
\end{equation}
for $l\in\{0,1,2\}$. 

Now consider $\{i,l\}=\{1,2\}$. Multiplying (\ref{fp}) by $\beta_i$ and using (\ref{eq4}), we obtain
\[
\beta_i F'= G^2 +2\beta_iN_iS_iS'_i.
\]

\rf{Lemma}{lem:gdf} implies that $G$ divides $2\beta_iN_iS_iS'_i$. Thus, from \eqref{eq:gs} we get
\[
S_0S_l \mid  2\beta_i N_iS'_i,
\]
since $(S_0S_l,N_i)=1$. This in turn implies 
\[
S_0S_l\mid 2\beta_iS'_i.
\]
Notice that $S'_i= 0$, for $i =1,2$, otherwise   (\ref{ss}) would imply
\[
s_i \leq s_0+s_l \leq s_i -1.
\]
Thus, \eqref{fp} reads as $$F'_i=N'_iS_i^2,$$ and \eqref{eq4} becomes
\[
 G^2=\beta_i N'_iS_i^2=\beta_i F'_i=\beta_iF'.
\]
This finishes the proof that (A), (B) and (C) are all equivalent.

To show the second part, let us assume that either one of the equivalent statements (A), (B) or (C) is true. Then the last equality shows that necessarily $\beta=\beta_1=\beta_2$, since $F'\neq 0$.

By performing a change of variable $x\longmapsto x+\alpha_0$, we obtain an elliptic curve isomorphic to $E$, and we may assume that $\alpz=0$. Therefore, from \eqref{eq:betadef} we arrive at
\[
 \frac{\alpha_1 (\alpha_1 -\alpha_2)}{\gamma} = \beta_1 = \beta_2 = \frac{\alpha_2 (\alpha_2 -\alpha_1)}{\gamma}.
\]
Thus,  $\alpu^2 =\alpd^2$.  Since the $\alpha_i$'s are all distinct, we have $\alpu =-\alpd\neq 0$. This shows that $E$ is isomorphic over $ k$ (or an extension of $k$) to $y^2=x^3-a^2x$, for $a=\alpd$. Since this last elliptic curve has $j$-invariant  1728, the result follows. 
\end{proof}

We give an example  to show that the hypothesis on Theorem \ref{eqiv} do  not give  vacuous conditions.
\begin{example}
 Let $k$ be a finite field of size $q\equiv 3 \mod 4$. Then
 $$
 (t^{(q-1)/2},t^{(q-3)/4})
 $$
 is a separable integral point on $(t^q-t)y^2=x^3-x$.
\end{example}

\bibliographystyle{amsalpha}

\end{document}